\numberwithin{equation}{section}
\newtheorem{theorem}{Theorem}[section]
\newtheorem{corollary}[theorem]{Corollary}
\newtheorem{lemma}[theorem]{Lemma}
\newtheorem{assumption}[theorem]{Assumption}
\newtheorem{remark}[theorem]{Remark}
\newcommand{\T}{\top}
\newcommand{\E}{\mathbb{E}}
\newcommand\numberthis{\addtocounter{equation}{1}\tag{\theequation}}
\newcommand{\etal}{ et al. }
\newcommand{\argmin}{\mathop{\rm argmin}}
\newcommand{\br}{\mathbb{R}}
\newcommand{\be}{\begin{equation}}
\newcommand{\ee}{\end{equation}}
\newcommand{\ba}{\begin{array}}
\newcommand{\ea}{\end{array}}
\begin{document}

\title{Barzilai-Borwein Step Size for Stochastic Gradient Descent}

\author{Conghui Tan\footnotemark[2]\qquad Shiqian Ma\footnotemark[2]\qquad Yu-Hong Dai\footnotemark[3]\qquad Yuqiu Qian\footnotemark[4]}

\date{May 16, 2016}

\maketitle

\renewcommand{\thefootnote}{\fnsymbol{footnote}}


\footnotetext[2]{Department of Systems Engineering and Engineering Management, The Chinese University of Hong Kong, Hong Kong. Email: chtan@se.cuhk.edu.hk, sqma@se.cuhk.edu.hk.}

\footnotetext[3]{Academy of Mathematics and Systems Science, Chinese Academy of Sciences, Beijing, China. Email: dyh@lsec.cc.ac.cn.}

\footnotetext[4]{Department of Computer Science, The University of Hong Kong, Hong Kong. Email: qyq79@connect.hku.hk.}

\renewcommand{\thefootnote}{\arabic{footnote}}

\begin{abstract}
One of the major issues in stochastic gradient descent (SGD) methods is how to choose an appropriate step size while running the algorithm. Since the traditional line search technique does not apply for stochastic optimization algorithms, the common practice in SGD is either to use a diminishing step size, or to tune a fixed step size by hand, {which can be time consuming in practice.}
In this paper, we propose to use the Barzilai-Borwein (BB) method to automatically compute step sizes for SGD and its variant: stochastic variance reduced gradient (SVRG) method, which leads to two algorithms: SGD-BB and SVRG-BB. We prove that SVRG-BB converges linearly for strongly convex objective functions. As a by-product, we prove the linear convergence result of SVRG with {\it Option I} proposed in \cite{svrg}, whose convergence result is missing in the literature. Numerical experiments on standard data sets show that the performance of SGD-BB and SVRG-BB is comparable to and sometimes even better than SGD and SVRG with best-tuned step sizes, and is superior to some advanced SGD variants.
\end{abstract}

\section{Introduction}
The following optimization problem, which minimizes the sum of cost functions over samples from a finite training set, appears frequently in machine learning:
\begin{equation}\label{eq:problem}
    \min \ F(x)\equiv\frac{1}{n}\sum_{i=1}^n f_i(x),
\end{equation}
where $n$ is the sample size, and each $f_i:\br^d\rightarrow\br$ is the cost function corresponding to the $i$-th sample data. Throughout this paper, we assume that each $f_i$ is convex and differentiable, and the function $F$ is strongly convex.
Problem \eqref{eq:problem} is challenging when $n$ is extremely large so that computing $F(x)$ and $\nabla F(x)$ for given $x$ is prohibited. Stochastic gradient descent (SGD) method and its variants have been the main approaches for solving \eqref{eq:problem}.
In the $t$-th iteration of SGD, a random training sample $i_t$ is chosen from $\{1,2,\dots,n\}$ and the iterate $x_t$ is updated by
\begin{equation}\label{sgd}
    x_{t+1}= x_{t}-\eta_t \nabla f_{i_t}(x_t),
\end{equation}
where $\nabla f_{i_t}(x_t)$ denotes the gradient of the $i_t$-th component function at $x_t$, and $\eta_t>0$ is the step size (a.k.a.  learning rate). In \eqref{sgd}, it is usually assumed that $\nabla f_{i_t}$ is an unbiased estimation to $\nabla F$, i.e.,
\be\label{unbiased}\E[\nabla f_{i_t}(x_t)\mid x_t] = \nabla F(x_t).\ee
However, it is known that the total number of gradient evaluations of SGD depends on the variance of the stochastic gradients and it is of sublinear convergence rate for the strongly convex and smooth problem \eqref{eq:problem}, which is inferior to the full gradient method. As a result, many works along this line have been focusing on designing variants of SGD that can reduce the variance and improve the complexity. Some popular methods along this line are briefly summarized as follows. The stochastic average gradient (SAG) method proposed by Le Roux \etal \cite{schmidt2013minimizing} updates the iterates by
\be\label{sag}
    x_{t+1} = x_t - \frac{\eta_t}{n}\sum_{i=1}^n y_i^t,
\ee
where at each iteration a random training sample $i_t$ is chosen and $y_i^t$ is defined as
\[y_i^t = \left\{\ba{ll}\nabla f_{i}(x_t) & \mbox{ if } i=i_t, \\ y_i^{t-1}, & \mbox{ otherwise.}\ea\right.\]
It is shown in \cite{schmidt2013minimizing} that SAG converges linearly for strongly convex problems. The SAGA method proposed by Defazio \etal \cite{defazio2014saga} is an improved version of SAG, and it does not require the strong convexity assumption. 
{It is noted that SAG and SAGA need to store the latest gradients for the $n$ component functions $f_i$. }
The SDCA method proposed by Shalev-Shwartz and Zhang \cite{Shai-Zhang-SDCA} also requires to store all the component gradients.
The stochastic variance reduced gradient (SVRG) method proposed by Johnson and Zhang \cite{svrg} is now widely used in the machine learning community for solving \eqref{eq:problem}, because it achieves the variance reduction effect for SGD, and it does not need to store the $n$ component gradients.

As pointed out by Le Roux \etal \cite{schmidt2013minimizing}, one important issue regarding to stochastic algorithms (SGD and its variants) that has not been fully addressed in the literature, is how to choose an appropriate step size $\eta_t$ while running the algorithm. In classical gradient descent method, the step size is usually obtained by employing line search techniques. However, line search is computationally prohibited in stochastic gradient methods because one only has sub-sampled information of function value and gradient. As a result, for SGD and its variants used in practice, people usually use a diminishing step size $\eta_t$, or use a best-tuned fixed step size. Neither of these two approaches can be efficient. 

Some recent works that discuss the choice of step size in SGD are summarized as follows. AdaGrad \cite{duchi2011adaptive} scales the gradient by the square root of the accumulated magnitudes of the gradients in the past iterations, but it still requires a fixed step size $\eta$. {\cite{schmidt2013minimizing} suggests a line search technique on the component function $f_{i_k}(x)$ selected in each iteration, to estimate step size for SAG.} 
\cite{mahsereci2015probabilistic} suggests performing line search for an estimated function, which is evaluated by a Gaussian process with samples $f_{i_t}(x_t)$. \cite{masse2015speed} suggests to generate the step sizes by a given function with an unknown parameter, and to use the online SGD to update this unknown parameter.

{\bf Our contributions} in this paper are in several folds.
\begin{itemize}
\item[(i).] We propose to use the Barzilai-Borwein (BB) method to compute the step size for SGD and SVRG. The two new methods are named as SGD-BB and SVRG-BB, respectively. The per-iteration computational cost of SGD-BB and SVRG-BB is almost the same as SGD and SVRG, respectively.
\item[(ii).] We prove the linear convergence of SVRG-BB for strongly convex functions. As a by-product, we show the linear convergence of SVRG with Option I (SVRG-I) proposed in \cite{svrg}. Note that in \cite{svrg} only convergence of SVRG with Option II (SVRG-II) was given, and the proof for SVRG-I has been missing in the literature. However, SVRG-I is numerically a better choice than SVRG-II, as demonstrated in \cite{svrg}.
\item[(iii).] We conduct numerical experiments for SGD-BB and SVRG-BB on solving logistic regression and SVM problems. The numerical results show that SGD-BB and SVRG-BB are comparable to and sometimes even better than SGD and SVRG with best-tuned step sizes. We also compare SGD-BB with some advanced SGD variants, and demonstrate that our method is superior.
\end{itemize}

The rest of this paper is organized as follows. In Section \ref{sec:bb} we briefly introduce the BB method in the deterministic setting. In Section \ref{sec:svrg-bb} we propose our SVRG-BB method, and prove its linear convergence for strongly convex functions. As a by-product, we also prove the linear convergence of SVRG-I. In Section \ref{sec:sgd-bb} we propose our SGD-BB method. A smoothing technique is also implemented to improve the performance of SGD-BB. We conduct numerical experiments for SVRG-BB and SGD-BB in Section \ref{sec:num}. Finally, we draw some conclusions in Section \ref{sec:con}.

\section{The Barzilai-Borwein Step Size}\label{sec:bb}
The BB method, proposed by Barzilai and Borwein in \cite{barzilai1988two}, has been proved to be very successful in solving nonlinear optimization problems. The key idea behind the BB method is motivated by quasi-Newton methods. Suppose we want to solve the unconstrained minimization problem
\be\label{unconstrained}\min_x \ f(x),\ee
where $f$ is differentiable. A typical iteration of quasi-Newton methods for solving \eqref{unconstrained} takes the following form:
\begin{equation}\label{eq:quasi-newton}
    x_{t+1} = x_t - B_t^{-1}\nabla f(x_t),
\end{equation}
where $B_t$ is an approximation of the Hessian matrix of $f$ at the current iterate $x_t$. Different choices of $B_t$ give different quasi-Newton methods. The most important feature of $B_t$ is that it must satisfy the so-called secant equation:
\begin{equation}\label{eq:secant_equation}
    B_t s_t=y_t,
\end{equation}
where $s_t=x_t-x_{t-1}$ and $y_t=\nabla f(x_t)-\nabla f(x_{t-1})$ for $t\geq 1$.
It is noted that in \eqref{eq:quasi-newton} one needs to solve a linear system, which may be time consuming when $B_t$ is large and dense.
One way to alleviate this burden is to use the BB method, which replaces $B_t$ by a scalar matrix $\frac{1}{\eta_t} I$. However, one cannot choose a scalar $\eta_t$ such that the secant equation \eqref{eq:secant_equation} holds with $B_t=\frac{1}{\eta_t} I$. Instead, one can find $\eta_t$ such that the residual of the secant equation is minimized, i.e.,
\[\min_{\eta_t} \left\| \frac{1}{\eta_t} s_t - y_t \right\|_2^2,\]
which leads to the following choice of $\eta_t$:
\be\label{bb1-eta} \eta_t = \frac{\|s_t\|_2^2}{s_t^\T y_t}. \ee
Therefore, a typical iteration of the BB method for solving \eqref{unconstrained} is
\be\label{bb-update} x_{t+1} = x_t - \eta_t\nabla f(x_t),\ee
where $\eta_t$ is computed by \eqref{bb1-eta}.

\begin{remark}
Another choice of $\eta_t$ is obtained by solving
\[\min_{\eta_t} \| s_t - \eta_t y_t \|_2^2,\]
which leads to
\be\label{bb2-eta}\eta_t = \frac{s_t^\T y_t}{\|y_t\|_2^2}. \ee
In this paper, we will focus on the choice in \eqref{bb1-eta}, because the practical performance of \eqref{bb1-eta} and \eqref{bb2-eta} are similar.
\end{remark}

For convergence analysis, generalizations and variants of the BB method, we refer the interested readers to \cite{raydan1993barzilai,raydan1997barzilai,fletcher2005barzilai,Dai-Fletcher-2005,Dai-CBB-2006,Dai-2013} and references therein.
Recently, BB method has been successfully applied for solving problems arising from emerging applications, such as compressed sensing \cite{sparsa-2009}, sparse reconstruction \cite{Wen-SISC-2010} and image processing \cite{Wang-Ma-2007}.


\section{Barzilai-Borwein Step Size for SVRG}\label{sec:svrg-bb}
We see from \eqref{bb-update} and \eqref{bb1-eta} that the BB method does not need any parameter and the step size is computed while running the algorithm. This has been the main motivation for us to work out a black-box stochastic gradient descent method that can compute the step size automatically without requiring any parameters. In this section, we propose to incorporate the BB step size to SVRG which leads to the SVRG-BB method.

The following assumption is made throughout this section.
\begin{assumption}\label{assump-1}
We assume that \eqref{unbiased} holds for any $x_t$. We assume that the objective function $F(x)$ is $\mu$-strongly convex, i.e.,
\begin{equation*}
    F(y)\geq F(x)+\nabla F(x)^\T(y-x)+\frac{\mu}{2}\|x-y\|^2_2,\quad \forall x,y \in \mathbb{R}^d.
\end{equation*}
We also assume that the gradient of each component function $f_i(x)$ is $L$-Lipschitz continuous, i.e.,
\begin{equation*}
    \|\nabla f_i(x)-\nabla f_i(y)\|_2\leq L\|x-y\|_2,\; \forall x,y \in \mathbb{R}^d.
\end{equation*}
Under this assumption, it is easy to see that $\nabla F(x)$ is also $L$-Lipschitz continuous:
\begin{equation*}
    \|\nabla F(x)-\nabla F(y)\|_2\leq L\|x-y\|_2,\; \forall x,y \in \mathbb{R}^d.
\end{equation*}
\end{assumption}

\subsection{SVRG Method}
The SVRG method proposed by Johnson and Zhang \cite{svrg} for solving \eqref{eq:problem} is described as in Algorithm \ref{alg:svrg}. 

\begin{algorithm}[ht]
    \caption{Stochastic Variance Reduced Gradient (SVRG) Method}\label{alg:svrg}
    \begin{algorithmic}
        \STATE \textbf{Parameters}: update frequency $m$, step size $\eta$, initial point $\tilde{x}_0$
        \FOR{$k=0, 1,\cdots$}
            \STATE $g_k=\frac{1}{n}\sum_{i=1}^n\nabla f_i(\tilde{x}_k)$
            \STATE $x_0 = \tilde{x}_k$
            \STATE $\eta_k = \eta$
            \FOR{$t=0,\cdots,m-1$}
                \STATE Randomly pick $i_t\in \{1,\dots,n\}$
                \STATE $x_{t+1}=x_t-\eta_k(\nabla f_{i_t}(x_t) - \nabla f_{i_t}(\tilde{x}_k) + g_k)$
            \ENDFOR
            \STATE \textbf{Option I}: $\tilde{x}_{k+1}=x_{m}$
            \STATE \textbf{Option II}: $\tilde{x}_{k+1}=x_{t}$ for randomly chosen $t\in \{1,\dots,m\}$
        \ENDFOR
   \end{algorithmic}
\end{algorithm}

There are two loops in SVRG (Algorithm \ref{alg:svrg}). In the outer loop (each outer iteration is called an {\it epoch}), a full gradient $g_k$ is computed, which is used in the inner loop for generating stochastic gradients with lower variance. $\tilde{x}$ is then chosen, based on the outputs of inner loop, for the next outer loop. Note that two options for choosing $\tilde{x}$ are suggested in SVRG. Intuitively, Option I in SVRG (denoted as SVRG-I) is a better choice than Option II (denoted as SVRG-II), because the former used the latest information from the inner loop. This has been confirmed numerically in \cite{svrg} where SVRG-I was applied to solve real applications. However, the convergence analysis is only available for SVRG-II (see, e.g., \cite{svrg}, \cite{konevcny2013semi} and \cite{harikandeh2015stopwasting}), and the convergence for SVRG-I has been missing in the literature. We now cite the convergence analysis of SVRG-II given in \cite{svrg} as follows.

\begin{theorem}[\cite{svrg}]\label{thm:svrg}
Consider SVRG in Algorithm \ref{alg:svrg} with {\it Optioin II}. Let $x^*$ be the optimal solution to problem \eqref{eq:problem}. Assume that $m$ is sufficiently large so that
\begin{equation}\label{convergence-svrg-ii-alpha}
\alpha:=\frac{1}{\mu\eta(1-2L\eta)m}+\frac{2L\eta}{1-2L\eta}<1,
\end{equation}
then we have linear convergence in expectation for SVRG:
\begin{equation*}
\E\left[F(\tilde{x}_k)-F(x^*)\right]\leq \alpha^k[F(\tilde{x}_0)-F(x^*)].
\end{equation*}
\end{theorem}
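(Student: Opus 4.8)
The statement is quoted verbatim from \cite{svrg}, so the natural plan is to reproduce the variance-reduction argument of Johnson and Zhang. Throughout, fix an epoch $k$, abbreviate $\tilde x:=\tilde x_k$, and write $v_t:=\nabla f_{i_t}(x_t)-\nabla f_{i_t}(\tilde x)+g_k$ for the corrected gradient used in the inner loop, where $g_k=\nabla F(\tilde x)$. The first observation is that, conditioned on $x_t$, this estimator is unbiased, $\E[v_t\mid x_t]=\nabla F(x_t)$, so the inner update $x_{t+1}=x_t-\eta v_t$ is a constant-step stochastic gradient step. The whole game is to control $\E\|v_t\|_2^2$ by the function-value gaps, so that the descent term $-2\eta(x_t-x^*)^\T\nabla F(x_t)$ coming from convexity dominates the noise.

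First I would establish the key variance lemma. For each $i$, the function $h_i(x):=f_i(x)-f_i(x^*)-\nabla f_i(x^*)^\T(x-x^*)$ is convex, $L$-smooth, nonnegative, and minimized at $x^*$; smoothness then gives $\tfrac{1}{2L}\|\nabla h_i(x)\|_2^2\le h_i(x)$, i.e. $\|\nabla f_i(x)-\nabla f_i(x^*)\|_2^2\le 2L\,h_i(x)$. Averaging over $i$ and using $\nabla F(x^*)=0$ yields $\frac1n\sum_{i=1}^n\|\nabla f_i(x)-\nabla f_i(x^*)\|_2^2\le 2L\,[F(x)-F(x^*)]$. Applying this at $x=x_t$ and at $x=\tilde x$, together with $\|a+b\|_2^2\le 2\|a\|_2^2+2\|b\|_2^2$ and the elementary bound $\E\|\xi-\E\xi\|_2^2\le\E\|\xi\|_2^2$ (used on $\xi=\nabla f_{i_t}(\tilde x)-\nabla f_{i_t}(x^*)$, whose conditional mean is $g_k$), gives
\[
\E[\|v_t\|_2^2\mid x_t]\ \le\ 4L\big([F(x_t)-F(x^*)]+[F(\tilde x)-F(x^*)]\big).
\]

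Next I would run the one-step energy estimate. Expanding $\|x_{t+1}-x^*\|_2^2=\|x_t-x^*\|_2^2-2\eta (x_t-x^*)^\T v_t+\eta^2\|v_t\|_2^2$, taking conditional expectation, and using unbiasedness together with the convexity inequality $(x_t-x^*)^\T\nabla F(x_t)\ge F(x_t)-F(x^*)$ and the variance lemma, I get
\[
\E[\|x_{t+1}-x^*\|_2^2\mid x_t]\ \le\ \|x_t-x^*\|_2^2-2\eta(1-2L\eta)[F(x_t)-F(x^*)]+4L\eta^2[F(\tilde x)-F(x^*)].
\]
Summing over $t=0,\dots,m-1$ in the epoch, the $\|x_t-x^*\|_2^2$ terms telescope; dropping the final nonnegative term and using $x_0=\tilde x$ together with the strong-convexity bound $\|\tilde x-x^*\|_2^2\le \tfrac{2}{\mu}[F(\tilde x)-F(x^*)]$, one arrives at
\[
2\eta(1-2L\eta)\sum_{t=0}^{m-1}\E[F(x_t)-F(x^*)]\ \le\ \Big(\tfrac{2}{\mu}+4Lm\eta^2\Big)\,\E[F(\tilde x)-F(x^*)].
\]
Finally, under \emph{Option II}, $\tilde x_{k+1}$ is chosen uniformly among the inner iterates, so $\E[F(\tilde x_{k+1})-F(x^*)]=\frac1m\sum_{t}\E[F(x_t)-F(x^*)]$; dividing the last display by $2\eta(1-2L\eta)m$ identifies the contraction factor as exactly $\alpha$ in \eqref{convergence-svrg-ii-alpha}. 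Iterating over $k$ gives $\E[F(\tilde x_k)-F(x^*)]\le\alpha^k[F(\tilde x_0)-F(x^*)]$, and the hypothesis that $m$ is ``sufficiently large'' (with $\eta$ small enough that $1-2L\eta>0$) is precisely the condition $\alpha<1$ that makes this a genuine contraction.

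I expect the only real obstacle to be the variance lemma: obtaining the correct constant in $\|\nabla f_i(x)-\nabla f_i(x^*)\|_2^2\le 2L\,h_i(x)$ from smoothness, and the bookkeeping that the cross term in $\E\|v_t\|_2^2$ is tamed by recentering at $\nabla F(x^*)=0$ before applying ``variance $\le$ second moment''. Once that bound is in hand, the remaining steps — the telescoping sum and the strong-convexity substitution — are routine.
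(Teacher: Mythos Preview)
Your argument is the standard Johnson--Zhang proof and is essentially correct: the variance lemma via the auxiliary functions $h_i$, the one-step descent inequality, the telescoping sum, and the strong-convexity substitution all go through and produce exactly the factor $\alpha$ in \eqref{convergence-svrg-ii-alpha}. Note, however, that the paper does \emph{not} actually prove Theorem~\ref{thm:svrg}: it is quoted from \cite{svrg} and stated without proof, so there is no ``paper's own proof'' to compare against here. Your reconstruction of the original argument is faithful to \cite{svrg}.

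One small bookkeeping point worth tightening: as stated in Algorithm~\ref{alg:svrg}, Option~II draws $\tilde x_{k+1}$ uniformly from $\{x_1,\dots,x_m\}$, whereas your telescoping sum naturally yields $\sum_{t=0}^{m-1}\E[F(x_t)-F(x^*)]$. The two index ranges differ by the endpoints $x_0$ versus $x_m$; this discrepancy is harmless (and different sources state Option~II with either convention), but you should either align the sampling set with the sum or note explicitly why the off-by-one does not affect the bound.
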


There has been a series of follow-up works on SVRG and its variants. Xiao and Zhang \cite{xiao2014proximal} developed a proximal SVRG method for minimizing the finite sum function plus a nonsmooth regularizer.
\cite{nitanda2014stochastic} applied Nesterov's acceleration technique to SVRG to improve the convergence rate that depends on the condition number $L/\mu$. \cite{harikandeh2015stopwasting} proved if the full gradient computation $g_k$ was replaced by a growing-batch estimation, the linear convergence rate can be preserved. \cite{allen2016variance} and \cite{Reddi2016SVRG} showed that SVRG with minor modifications can converge to a stationary point for nonconvex optimization problems.

\subsection{SVRG-BB Method}
It is noted that in SVRG, the step size $\eta$ needs to be provided by the user. According to \eqref{convergence-svrg-ii-alpha}, the choice of $\eta$ is dependent on $L$, which may be difficult to estimate in practice. In this section, we propose the SVRG-BB method that computes the step size using the BB method.
Our SVRG-BB algorithm is described in Algorithm \ref{alg:svrg-bb}. Note that the only difference between SVRG and SVRG-BB is that in the latter we use BB method to compute the step size $\eta_k$, instead of using a prefixed $\eta$ as in SVRG. 

\begin{algorithm}[ht]
    \caption{SVRG with BB step size (SVRG-BB)}\label{alg:svrg-bb}
    \begin{algorithmic}
        \STATE \textbf{Parameters}: update frequency $m$, initial point $\tilde{x}_0$, initial step size $\eta_0$ (only used in the first epoch)
        \FOR{$k=0, 1,\cdots$}
            \STATE $g_k=\frac{1}{n}\sum_{i=1}^n\nabla f_i(\tilde{x}_k)$
            \IF{$k>0$}
                \STATE \be\label{alg:svrg-bb-eta}\eta_k=\frac{1}{m}\cdot\|\tilde{x}_k-\tilde{x}_{k-1}\|^2_2/(\tilde{x}_k-\tilde{x}_{k-1})^\T(g_k-g_{k-1})\ee 
            \ENDIF
            \STATE $x_0 = \tilde{x}_k$
            \FOR{$t=0,\cdots,m-1$}
                \STATE Randomly pick $i_t\in \{1,\dots,n\}$
                \STATE $x_{t+1}=x_t-\eta_k(\nabla f_{i_t}(x_t) - \nabla f_{i_t}(\tilde{x}_k) + g_k)$
            \ENDFOR
            \STATE $\tilde{x}_{k+1}=x_{m}$
        \ENDFOR
   \end{algorithmic}
\end{algorithm}

\begin{remark}
A few remarks are in demand for the SVRG-BB algorithm.
\begin{enumerate}
\item One may notice that $\eta_k$ is equal to the step size computed by the BB formula \eqref{bb1-eta} divided by $m$. This is because in the inner loop for updating $x_t$, $m$ unbiased gradient estimators are added to $x_0$ to get $x_m$.
\item If we always set $\eta_k = \eta$ in SVRG-BB instead of using \eqref{alg:svrg-bb-eta}, then it reduces to SVRG-I.
\item For the first outer loop of SVRG-BB, a step size $\eta_0$ needs to be specified, because we are not able to compute the BB step size for the first outer loop. However, we observed from our numerical experiments that the performance of SVRG-BB is not sensitive to the choice of $\eta_0$.
\item The BB step size can also be naturally incorporated to other SVRG variants, such as SVRG with batching \cite{harikandeh2015stopwasting}.
\end{enumerate}
\end{remark}

\subsection{Linear Convergence Analysis}
\label{sec:proof}
In this section, we analyze the linear convergence of SVRG-BB (Algorithm \ref{alg:svrg-bb}) for solving \eqref{eq:problem} with strongly convex objective $F(x)$, and as a by-product, our analysis also proves the linear convergence of SVRG-I.

The following lemma, which is from \cite{nesterov2004introductory}, is useful in our analysis.
\begin{lemma}[co-coercivity] \label{co-coercivity}
If $f(x):\mathbb{R}^d\rightarrow \mathbb{R}$ is convex and its gradient is $L$-Lipschitz continuous, then
\begin{equation*}
    \|\nabla f(x)-\nabla f(y)\|^2_2\leq L( x-y)^\T (\nabla f(x) -\nabla f(y) ),\quad \forall x,y  \in \mathbb{R}^d.
\end{equation*}
\end{lemma}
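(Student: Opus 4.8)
The plan is to reduce the statement to the standard fact that a convex function with $L$-Lipschitz gradient cannot exceed its minimum value by more than $\frac{1}{2L}$ times the squared norm of its gradient, and then to symmetrize in $x$ and $y$. Fix $y\in\br^d$ and introduce the auxiliary function $\phi(x):=f(x)-\nabla f(y)^\T x$. Since $f$ is convex, so is $\phi$; moreover $\nabla\phi(x)=\nabla f(x)-\nabla f(y)$ is $L$-Lipschitz continuous, and $\nabla\phi(y)=0$, so $y$ is a global minimizer of $\phi$.

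Next I would invoke the descent lemma, which follows directly from $L$-Lipschitz continuity of $\nabla\phi$: for all $x,z\in\br^d$,
\[\phi(z)\leq\phi(x)+\nabla\phi(x)^\T(z-x)+\frac{L}{2}\|z-x\|_2^2.\]
Minimizing the right-hand side over $z$, i.e. taking $z=x-\frac{1}{L}\nabla\phi(x)$, and using that the left-hand side is bounded below by $\phi(y)=\min_z\phi(z)$, gives
\[\phi(y)\leq\phi(x)-\frac{1}{2L}\|\nabla\phi(x)\|_2^2,\]
which, unwinding the definition of $\phi$, reads $\frac{1}{2L}\|\nabla f(x)-\nabla f(y)\|_2^2\leq f(x)-f(y)-\nabla f(y)^\T(x-y)$.

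Finally, I would write the same inequality with the roles of $x$ and $y$ interchanged and add the two: the function-value terms cancel and the gradient terms combine to yield $\frac{1}{L}\|\nabla f(x)-\nabla f(y)\|_2^2\leq(x-y)^\T(\nabla f(x)-\nabla f(y))$, which is the claimed inequality after multiplying through by $L$. The only step with real content is the ``improved descent'' bound obtained by minimizing the quadratic upper model; the rest is bookkeeping. Since this lemma is quoted verbatim from \cite{nesterov2004introductory}, I expect the paper simply to cite it rather than reproduce the argument, but the sketch above is the standard self-contained proof.
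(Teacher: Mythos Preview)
Your proof is correct and is the standard argument for co-coercivity. As you anticipated, the paper does not supply its own proof; it simply cites \cite{nesterov2004introductory} and states the lemma, so there is no alternative argument in the paper to compare against.
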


In the following, we first prove the following lemma, which reveals the relationship between the distances of two consecutive iterates to the optimal point.

\begin{lemma}\label{thm:new-svrg}
Define
\begin{equation}\label{eq:def-alpha}
    \alpha_k:=\left(1-2\eta_k\mu(1-\eta_k L)\right)^m + \frac{4\eta_k L^2}{\mu(1-\eta_k L)}.
\end{equation}
For both SVRG-I and SVRG-BB, we have the following inequality for the $k$-th epoch:
\begin{equation*}
    \E\left\|\tilde{x}_{k+1} - x^*\right\|_2^2 < \alpha_k\|\tilde{x}_k-x^*\|^2_2,
\end{equation*}
where $x^*$ is the optimal solution to \eqref{eq:problem}.
\end{lemma}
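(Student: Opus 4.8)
The plan is to bound the expected squared distance $\E\|x_m - x^*\|_2^2$ after one epoch in terms of $\|\tilde x_k - x^*\|_2^2$, following the variance-reduction argument of Johnson and Zhang but tracking iterates rather than function values. First I would fix an epoch $k$, write $\eta := \eta_k$, $\tilde x := \tilde x_k$, $v_t := \nabla f_{i_t}(x_t) - \nabla f_{i_t}(\tilde x) + g_k$ for the variance-reduced gradient estimator, and note that by \eqref{unbiased} we have $\E[v_t \mid x_t] = \nabla F(x_t)$. Then, expanding the update $x_{t+1} = x_t - \eta v_t$,
\begin{equation*}
\E\left[\|x_{t+1}-x^*\|_2^2 \mid x_t\right] = \|x_t - x^*\|_2^2 - 2\eta (x_t - x^*)^\T \nabla F(x_t) + \eta^2 \E\left[\|v_t\|_2^2 \mid x_t\right].
\end{equation*}
The cross term is controlled by strong convexity of $F$: $(x_t - x^*)^\T \nabla F(x_t) \ge \nabla F(x_t)^\T(x_t-x^*) \ge F(x_t) - F(x^*) + \frac{\mu}{2}\|x_t - x^*\|_2^2$, which will supply the contraction factor $(1 - \eta\mu\,(\cdots))$.

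Next I would bound the second-moment term $\E[\|v_t\|_2^2\mid x_t]$. Using the standard decomposition and the fact that $\nabla f_i(x^*)$ averages to $\nabla F(x^*)=0$, one gets $\E\|v_t\|_2^2 \le 2\E\|\nabla f_{i_t}(x_t) - \nabla f_{i_t}(x^*)\|_2^2 + 2\E\|\nabla f_{i_t}(\tilde x) - \nabla f_{i_t}(x^*) - g_k\|_2^2 \le 2\E\|\nabla f_{i_t}(x_t) - \nabla f_{i_t}(x^*)\|_2^2 + 2\E\|\nabla f_{i_t}(\tilde x) - \nabla f_{i_t}(x^*)\|_2^2$ (dropping the mean in the variance). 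Now apply co-coercivity (Lemma \ref{co-coercivity}) to each component $f_i$ to turn $\|\nabla f_i(x) - \nabla f_i(x^*)\|_2^2$ into $L(x - x^*)^\T(\nabla f_i(x) - \nabla f_i(x^*))$; averaging over $i$ gives a bound in terms of $L(x - x^*)^\T \nabla F(x)$, and in particular $\E\|\nabla f_{i_t}(x_t)-\nabla f_{i_t}(x^*)\|_2^2 \le L(x_t-x^*)^\T\nabla F(x_t)$ and similarly with $x_t$ replaced by $\tilde x$. Substituting back, the surviving $(x_t-x^*)^\T\nabla F(x_t)$ terms combine with the cross term; choosing the algebra so that the coefficient of $(x_t - x^*)^\T\nabla F(x_t)$ is $-2\eta(1 - \eta L) \le 0$ (which it is as long as $\eta L \le 1$, a condition to be verified for the BB step size — presumably in a later lemma), we can drop that nonnegative quantity and are left with
\begin{equation*}
\E\left[\|x_{t+1}-x^*\|_2^2 \mid x_t\right] \le \left(1 - \eta\mu(1-\eta L)\right)\|x_t - x^*\|_2^2 + 2\eta^2 L^2 \|\tilde x - x^*\|_2^2,
\end{equation*}
or something of this exact shape (constants to be pinned down by the calculation).

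Finally I would unroll this one-step recursion over $t = 0, \dots, m-1$. Writing $\rho := 1 - \eta\mu(1-\eta L)$ and using $x_0 = \tilde x$, a geometric-sum argument gives
\begin{equation*}
\E\|x_m - x^*\|_2^2 \le \rho^m \|\tilde x - x^*\|_2^2 + 2\eta^2 L^2\left(\sum_{j=0}^{m-1}\rho^j\right)\|\tilde x - x^*\|_2^2 \le \left(\rho^m + \frac{2\eta^2 L^2}{1 - \rho}\right)\|\tilde x - x^*\|_2^2,
\end{equation*}
and $1 - \rho = \eta\mu(1-\eta L)$ turns the second term into $\frac{2\eta L^2}{\mu(1-\eta L)}$; tightening the variance bound (e.g., not doubling unnecessarily, or being slightly more careful about the $\tilde x$ term) should produce the stated $\alpha_k = (1 - 2\eta_k\mu(1-\eta_k L))^m + \frac{4\eta_k L^2}{\mu(1-\eta_k L)}$. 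Since $\tilde x_{k+1} = x_m$ for both SVRG-I and SVRG-BB, this is exactly the claim, and the strict inequality comes from the dropped nonnegative terms being generically positive. The main obstacle I anticipate is the bookkeeping in the second-moment bound: getting the constants to line up with $\alpha_k$ requires handling the $\|v_t\|_2^2$ estimate without the usual wasteful factor of $2$ on the $x_t$ term — one likely needs to keep $(x_t-x^*)^\T\nabla F(x_t)$ with a coefficient large enough to absorb it, which is precisely where the $1 - \eta L > 0$ (and implicitly $\eta L < 1/2$-type) requirement enters and where the argument must be done carefully rather than crudely.
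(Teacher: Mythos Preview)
Your plan is essentially the paper's proof: bound $\E\|v_t\|_2^2$, expand the one-step recursion, use strong convexity to get a contraction on $\|x_t-x^*\|_2^2$, and unroll over $t=0,\dots,m-1$ with a geometric sum. Two small bookkeeping choices differ from yours and are exactly what makes the constants match $\alpha_k$. First, the paper uses the monotone form of strong convexity, $(x_t-x^*)^\T\nabla F(x_t)\ge \mu\|x_t-x^*\|_2^2$ (not the function-value form with $\mu/2$), which gives the factor $1-2\eta\mu(1-\eta L)$ rather than $1-\eta\mu(1-\eta L)$. Second, the paper's variance bound is actually cruder than yours: it splits $v_t$ into three pieces via $(a-b)^2\le 2a^2+2b^2$ applied twice, then uses co-coercivity only on the $x_t$-piece and plain Lipschitz continuity on the two $\tilde x$-pieces, arriving at
\[
\E\|v_t\|_2^2 \le 2L(x_t-x^*)^\T\nabla F(x_t) + 8L^2\|\tilde x_k - x^*\|_2^2.
\]
With these two choices the one-step bound becomes $[1-2\eta_k\mu(1-\eta_k L)]\|x_t-x^*\|_2^2 + 8\eta_k^2 L^2\|\tilde x_k - x^*\|_2^2$, and the geometric sum $\sum_{j=0}^{m-1}\rho^j<\frac{1}{1-\rho}=\frac{1}{2\eta_k\mu(1-\eta_k L)}$ gives exactly $\frac{4\eta_k L^2}{\mu(1-\eta_k L)}$ and the strict inequality. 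Your tighter variance estimate (dropping the mean, co-coercivity on both pieces) is perfectly valid and would in fact yield a smaller $\alpha_k$, but it does not reproduce the stated constant; so there is no ``tightening'' needed---just swap in the paper's two choices above.
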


\begin{proof}
Let $v_{i_t}^t=\nabla f_{i_t}(x_t) - \nabla f_{i_t}(\tilde{x}_k) + \nabla F(\tilde{x}_k)$ for the $k$-th epoch of SVRG-I or SVRG-BB. Then,
\begin{align*}
    \E\|v_{i_t}^t\|^2_2 = &\E\left\|(\nabla f_{i_t}(x_t) - \nabla f_{i_t}(x^*)) \right.-\left.(\nabla f_{i_t}(\tilde{x}_k) - \nabla f_{i_t}(x^*)) + \nabla F(\tilde{x}_k)\right\|^2_2 \\
    \leq& 2\E\left\|\nabla f_{i_t}(x_t) - \nabla f_{i_t}(x^*)\right\|^2_2 +  4\E\left\|\nabla f_{i_t}(\tilde{x}_k) - \nabla f_{i_t}(x^*)\right\|^2_2 +
    4\|\nabla F(\tilde{x}_k)\|^2_2 \\
    \leq& 2L\E \left[(x_t-x^*)^\T(\nabla f_i(x_t)-\nabla f_i(x^*) )\right] +4L^2\|\tilde{x}_k-x^*\|^2_2+4L^2\|\tilde{x}_k-x^*\|^2_2 \\
    =& 2L (x_t-x^*)^\T \nabla F(x_t) + 8L^2\|\tilde{x}_k-x^*\|^2_2,
\end{align*}
where in the first inequality we used the inequality $(a-b)^2\leq 2a^2+2b^2$ twice, in the second inequality we applied Lemma \ref{co-coercivity} to $f_{i_t}(x)$ and used the Lipschitz continuity of $\nabla f_{i_t}$ and $\nabla F$, and in the last equality we used the facts that $\E[\nabla f_{i_t}(x)]=\nabla F(x)$ and $\nabla F(x^*)=0$.

In the next, we bound the distance of $x_{t+1}$ to $x^*$ conditioned on $x_t$ and $\tilde{x}_k$.
\begin{align*}
    &\E\|x_{t+1} - x^*\|^2_2 \\
    =&\E\|x_t-\eta_k v_{i_t}^t-x^*\|^2_2 \\
    =& \|x_t-x^*\|^2_2-2\eta_k\E[(x_t-x^*)^\T v_{i_t}^t] + \eta_k^2\E\|v_{i_t}^t\|^2_2 \\
    =& \|x_t-x^*\|^2_2-2\eta_k (x_t-x^*)^\T \nabla F(x_t) + \eta_k^2\E\|v_{i_t}^t\|^2_2 \\
    \leq& \|x_t-x^*\|^2_2-2\eta_k (x_t-x^*)^\T \nabla F(x_t)
     + 2\eta_k^2L (x_t-x^*)^\T\nabla F(x_t) + 8\eta_k^2L^2\|\tilde{x}_k-x^*\|^2_2 \\
    =&\|x_t-x^*\|^2_2 - 2\eta_k(1-\eta_k L) (x_t-x^*)^\T \nabla F(x_t)   + 8\eta_k^2L^2\|\tilde{x}_k-x^*\|^2_2 \\
    \leq& \|x_t-x^*\|^2_2 - 2\eta_k\mu(1-\eta L)\|x_t-x^*\|^2+  8\eta_k^2L^2\|\tilde{x}_k-x^*\|^2_2 \\
    =& [1-2\eta_k\mu(1-\eta_k L)]\|x_t-x^*\|^2_2 + 8\eta_k^2L^2\|\tilde{x}_k-x^*\|^2_2,
\end{align*}
where in the third equality we used the fact that $\E[v_{i_t}^t]=\nabla F(x_t)$, and in the second inequality we used the strong convexity of $F(x)$.

By recursively applying the above inequality over $t$, and noting that $\tilde{x}_k=x_0$ and $\tilde{x}_{k+1}=x_m$, we can obtain
\begin{align*}
    &\E\|\tilde{x}_{k+1}-x^*\|^2_2 \\
    \leq&
    \left[1-2\eta_k\mu(1-\eta L)\right]^m\|\tilde{x}_k-x^*\|^2_2 +
    8\eta_k^2L^2\sum_{j=0}^{m-1}\left[1-2\eta_k\mu(1-\eta L)\right]^j\|\tilde{x}_k-x^*\|^2_2 \\
    <& \left[\left(1-2\eta_k\mu(1-\eta L)\right)^m + \frac{4\eta_k L^2}{\mu(1-\eta_k L)}\right]\|\tilde{x}_k-x^*\|^2_2 \\
    =&\alpha_k\|\tilde{x}_k-x^*\|^2_2. 
\end{align*}
\end{proof}

The linear convergence of SVRG-I follows immediately.
\begin{corollary}\label{cor-linear-conv-svrg-i}
In SVRG-I, if $m$ and $\eta$ are chosen such that
\begin{equation}
    \alpha:=\left(1-2\eta\mu(1-\eta L)\right)^m + \frac{4\eta L^2}{\mu(1-\eta L)} < 1,
\end{equation}
then SVRG-I (Algorithm \ref{alg:svrg} with Option I) converges linearly in expectation:
\begin{equation*}
    \E\left\|\tilde{x}_{k} - x^*\right\|_2^2 < \alpha^k\|\tilde{x}_0-x^*\|^2_2.
\end{equation*}
\end{corollary}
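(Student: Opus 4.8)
The plan is to obtain the corollary directly from Lemma~\ref{thm:new-svrg} by specializing to a constant step size and then unrolling the recursion. First I would observe that in SVRG-I one has $\eta_k \equiv \eta$ for every epoch $k$, so the per-epoch factor $\alpha_k$ defined in \eqref{eq:def-alpha} collapses to the single deterministic constant
\[
\alpha = \left(1-2\eta\mu(1-\eta L)\right)^m + \frac{4\eta L^2}{\mu(1-\eta L)},
\]
which by hypothesis satisfies $\alpha<1$. Consequently Lemma~\ref{thm:new-svrg} specializes to $\E\|\tilde{x}_{k+1}-x^*\|_2^2 < \alpha\,\|\tilde{x}_k-x^*\|_2^2$ for each $k\geq 0$, where this expectation is conditional on $\tilde{x}_k$ (equivalently, on all randomness through epoch $k-1$).

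Next I would pass from this conditional estimate to an unconditional one by taking total expectations. Because $\alpha$ here is a fixed constant independent of the random iterates (unlike in SVRG-BB, where $\eta_k$ and hence $\alpha_k$ are themselves random), the tower property of conditional expectation yields
\[
\E\|\tilde{x}_{k+1}-x^*\|_2^2 = \E\big[\,\E[\|\tilde{x}_{k+1}-x^*\|_2^2 \mid \tilde{x}_k]\,\big] \leq \alpha\,\E\|\tilde{x}_k-x^*\|_2^2
\]
for all $k\geq 0$; the strict inequality of the lemma is inherited (if $\tilde{x}_k=x^*$ with probability one the claimed bound is trivial). Then a routine induction on $k$ finishes: the base case $k=0$ is the equality $\E\|\tilde{x}_0-x^*\|_2^2=\|\tilde{x}_0-x^*\|_2^2$, and the inductive step multiplies the bound for epoch $k$ by $\alpha$, giving $\E\|\tilde{x}_k-x^*\|_2^2 < \alpha^k\|\tilde{x}_0-x^*\|_2^2$, which is the asserted linear convergence since $\alpha<1$.

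I do not anticipate any real obstacle here, since the substantive work is already contained in Lemma~\ref{thm:new-svrg}; the only points that require care are (i) recognizing that the contraction factor $\alpha_k$ becomes a single constant once $\eta$ is held fixed across epochs, and (ii) correctly invoking iterated expectations to turn the lemma's one-step conditional bound into an unconditional bound before telescoping. Everything else is a one-line induction.
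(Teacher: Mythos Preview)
Your proposal is correct and matches the paper's approach: the paper states the corollary without proof, simply noting that ``the linear convergence of SVRG-I follows immediately'' from Lemma~\ref{thm:new-svrg}. Your write-up just makes explicit the specialization $\eta_k\equiv\eta$ (so $\alpha_k\equiv\alpha$) and the telescoping via iterated expectations, which is exactly the intended one-line argument.
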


\begin{remark}
We now give some remarks on this convergence result.
\begin{enumerate}
\item To the best of our knowledge, this is the first time that the linear convergence of SVRG-I is established.
\item The condition required in \eqref{eq:def-alpha} is different from the condition required in \eqref{convergence-svrg-ii-alpha} for SVRG-II. As $m\rightarrow +\infty$, the first term in \eqref{convergence-svrg-ii-alpha} converges to 0 sublinearly, while the first term in \eqref{eq:def-alpha} converges to 0 linearly. On the other hand, the second term in \eqref{convergence-svrg-ii-alpha} reveals that $m$ depends on the condition number $L/\mu$ linearly, while the second term in \eqref{eq:def-alpha} suggests that $m$ depends on condition number $L/\mu$ quadratically. As a result, if the problem is ill-conditioned, then the convergence rate given in Corollary \ref{cor-linear-conv-svrg-i} might be slow.
\item The convergence result given in Corollary \ref{cor-linear-conv-svrg-i} is for the iterates $\tilde{x}_k$, while the one given in Theorem \ref{thm:svrg} is for the objective function values $F(\tilde{x}_k)$.
\end{enumerate}
\end{remark}

The following theorem establishes the linear convergence of SVRG-BB (Algorithm \ref{alg:svrg-bb}). 

\begin{theorem}\label{thm:svrg-bb}
Denote $\theta=(1-e^{-2\mu/L})/2$. It is easy to see that $\theta\in(0,1/2)$. In SVRG-BB, if $m$ is chosen such that
\begin{equation}
m>\max\left\{\frac{2}{\log(1-2\theta)+2\mu/L},\ \frac{4L^2}{\theta\mu^2}+\frac{L}{\mu}\right\},
\label{eq:cond-m}
\end{equation}
then SVRG-BB (Algorithm \ref{alg:svrg-bb}) converges linearly in expectation:
\begin{equation*}
    \E\left\|\tilde{x}_{k} - x^*\right\|_2^2 < (1-\theta)^k\|\tilde{x}_0-x^*\|^2_2.
\end{equation*}
\end{theorem}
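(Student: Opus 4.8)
The plan is to reduce everything to the single-epoch contraction already supplied by Lemma~\ref{thm:new-svrg}: that lemma gives $\E\|\tilde x_{k+1}-x^*\|_2^2 < \alpha_k\|\tilde x_k-x^*\|_2^2$ with $\alpha_k$ as in \eqref{eq:def-alpha}, so it is enough to show that, under condition \eqref{eq:cond-m}, every BB step size $\eta_k$ that the algorithm can produce yields $\alpha_k \le 1-\theta$. Then a telescoping argument closes the proof.

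\textbf{Step 1 (bracketing the BB step size).} Since $g_k=\frac1n\sum_i\nabla f_i(\tilde x_k)=\nabla F(\tilde x_k)$, formula \eqref{alg:svrg-bb-eta} reads $m\eta_k=\|\tilde x_k-\tilde x_{k-1}\|_2^2/(\tilde x_k-\tilde x_{k-1})^\T(\nabla F(\tilde x_k)-\nabla F(\tilde x_{k-1}))$. Applying $\mu$-strong convexity of $F$ (which gives $(\nabla F(x)-\nabla F(y))^\T(x-y)\ge\mu\|x-y\|_2^2$) to the denominator, and Cauchy--Schwarz with $L$-Lipschitz continuity of $\nabla F$ (which gives the reverse bound with constant $L$), we get
\[
\frac{1}{mL}\le\eta_k\le\frac{1}{m\mu}\qquad\text{for every }k\ge 1 .
\]
The first epoch uses the user-supplied $\eta_0$; we take $\eta_0$ in this same interval (e.g. $\eta_0=1/(mL)$) so that the bound below also covers $\alpha_0$.

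\textbf{Step 2 (uniform bound on $\alpha_k$).} It remains to prove that $\alpha(\eta):=\bigl(1-2\eta\mu(1-\eta L)\bigr)^m+\frac{4\eta L^2}{\mu(1-\eta L)}\le 1-\theta$ for all $\eta\in\left[\frac{1}{mL},\frac{1}{m\mu}\right]$. Because $m>L/\mu$ by \eqref{eq:cond-m}, we have $\eta L\le L/(m\mu)<1$, so both terms make sense and $1-\eta L$ stays bounded away from $0$. For the first term, $1-x\le e^{-x}$ together with $m\eta\ge 1/L$ and $\eta L\le L/(m\mu)$ gives
\[
\bigl(1-2\eta\mu(1-\eta L)\bigr)^m\le e^{-2m\eta\mu(1-\eta L)}\le e^{-2\mu/L+2/m}=(1-2\theta)\,e^{2/m},
\]
where the last equality is exactly the defining identity $e^{-2\mu/L}=1-2\theta$. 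For the second term, $\eta\le 1/(m\mu)$ gives $\frac{4\eta L^2}{\mu(1-\eta L)}\le\frac{4L^2}{m\mu^2(1-\eta L)}$, which the threshold $m>4L^2/(\theta\mu^2)+L/\mu$ drives below $\theta$ (the extra $L/\mu$ is what keeps $1-\eta L$ close to $1$). The second threshold on $m$ in \eqref{eq:cond-m} then makes $(1-2\theta)e^{2/m}$ small enough that, added to the bound on the second term, the sum is $\le 1-\theta$; this is a direct computation once the two pieces are in place.

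\textbf{Step 3 (telescoping) and the main difficulty.} Given $\tilde x_{k-1},\tilde x_k$, the step size $\eta_k$ and hence $\alpha_k$ are determined, and by Step~2 $\alpha_k\le 1-\theta$ surely; Lemma~\ref{thm:new-svrg} therefore gives $\E[\|\tilde x_{k+1}-x^*\|_2^2\mid \text{history through epoch }k]<(1-\theta)\|\tilde x_k-x^*\|_2^2$. Taking total expectations and iterating from $k=0$ yields $\E\|\tilde x_k-x^*\|_2^2<(1-\theta)^k\|\tilde x_0-x^*\|_2^2$. Steps~1 and~3 are routine; the crux is Step~2, i.e.\ calibrating the two explicit thresholds on $m$ so that the $O(1/m)$ overshoot factor $e^{2/m}$ on the first term of $\alpha(\eta)$ is absorbed by the slack created by forcing the second term below $\theta$ — and this must hold \emph{uniformly} over the entire admissible interval $[\tfrac1{mL},\tfrac1{m\mu}]$ of $\eta_k$, since the BB rule gives no finer control than Step~1. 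The choice $\theta=(1-e^{-2\mu/L})/2$ is precisely what turns the natural limiting value $e^{-2\mu/L}$ of the first term into $1-2\theta$, leaving a gap of $\theta$ to exploit.
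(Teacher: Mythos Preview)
Your approach is essentially the paper's: bracket the BB step by $\eta_k\in[1/(mL),1/(m\mu)]$ via strong convexity and the Lipschitz property of $\nabla F$, insert the worst-case endpoints into the two terms of $\alpha_k$ in \eqref{eq:def-alpha}, use the two thresholds in \eqref{eq:cond-m} to push those terms below $1-2\theta$ and $\theta$ respectively, and then telescope Lemma~\ref{thm:new-svrg}. Where you write ``a direct computation once the two pieces are in place'', the paper simply substitutes the thresholds into $\exp\{-2\mu/L+2/m\}$ and $4L^2/(m\mu^2-L\mu)$ to get exactly $(1-2\theta)+\theta=1-\theta$; note also that what you call ``the second threshold'' handling $(1-2\theta)e^{2/m}$ is in fact the \emph{first} entry of the max in \eqref{eq:cond-m}, and your remark that $\eta_0$ must be taken in $[1/(mL),1/(m\mu)]$ for $\alpha_0$ to be covered is a point the paper leaves implicit.
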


\begin{proof}
Using the strong convexity of function $F(x)$, it is easy to obtain the following upper bound for the BB step size computed in Algorithm \ref{alg:svrg-bb}.
\begin{align*}
    \eta_k&=\frac{1}{m}\cdot\frac{\|\tilde{x}_k-\tilde{x}_{k-1}\|^2_2}{(\tilde{x}_k-\tilde{x}_{k-1})^\T(g_k-g_{k-1})} \\
    &\leq \frac{1}{m}\cdot\frac{\|\tilde{x}_k-\tilde{x}_{k-1}\|^2_2}{\mu\|\tilde{x}_k-\tilde{x}_{k-1}\|_2^2} = \frac{1}{m\mu}.
\end{align*}
Similarly, by the $L$-Lipschitz continuity of $\nabla F(x)$, it is easy to obtain that $\eta_k$ is uniformly lower bounded by $1/(mL)$. Therefore, $\alpha_k$ in \eqref{eq:def-alpha} can be bounded as:
\begin{align*}
\alpha_k\leq& \left[1-\frac{2\mu}{mL}\left(1-\frac{L}{m\mu}\right)\right]^m + \frac{4L^2}{m\mu^2[1-L/(m\mu)]} \\
\leq& \exp\left\{-\frac{2\mu}{mL}\left(1-\frac{L}{m\mu}\right)\cdot m\right\} + \frac{4L^2}{m\mu^2[1-L/(m\mu)]} \\
=& \exp\left\{-\frac{2\mu}{L}+\frac{2}{m}\right\}+\frac{4L^2}{m\mu^2-L\mu},
\end{align*}
Substituting \eqref{eq:cond-m} into the above inequality yields
\begin{align*}
\alpha_k < \exp\left\{-\frac{2\mu}{L}+\log(1-2\theta)+\frac{2\mu}{L}\right\} +\frac{4L^2}{4L^2/\theta+L\mu-L\mu} = (1-2\theta)+\theta =1-\theta.
\end{align*}
The desired result follows by applying Lemma \ref{thm:new-svrg}.
\end{proof}

\section{Barzilai-Borwein Step Size for SGD}\label{sec:sgd-bb}

In this section, we propose to incorporate the BB method to SGD \eqref{sgd}. The BB method does not apply to SGD directly, because SGD never computes the full gradient $\nabla F(x)$. In SGD, $\nabla f_{i_t}(x_t)$ is an unbiased estimation for $\nabla F(x_t)$ when $i_t$ is uniformly sampled (see \cite{Needell-NIPS-2014,Zhang-importance-sampling-2015} for studies on importance sampling, which does not sample $i_t$ uniformly). Therefore, one may suggest to use $\nabla f_{i_{t+1}}(x_{t+1})-\nabla f_{i_t}(x_t)$ to estimate $\nabla F(x_{t+1})-\nabla F(x_t)$ when computing the BB step size using formula \eqref{bb1-eta}. However, this approach does not work well because of the variance of the stochastic gradient estimates. 
The recent work by Sopy{\l}a and Drozda \cite{sopyla2015stochastic} suggested several variants of this idea to compute an estimated BB step size using the stochastic gradients. However, these ideas lack theoretical justifications and the numerical results in \cite{sopyla2015stochastic} show that these approaches are inferior to existing methods such as averaged SGD \cite{Polyak-average-sgd-1992}. 

The SGD-BB algorithm we propose in this paper works in the following manner. We call every $m$ iterations of SGD as one epoch. Following the idea of SVRG-BB, SGD-BB also uses the same step size computed by the BB formula in every epoch.
Our SGD-BB algorithm is described as in Algorithm \ref{alg:sgd-bb}.

\begin{algorithm}[ht]
    \caption{SGD with BB step size (SGD-BB)}\label{alg:sgd-bb}
    \begin{algorithmic}
        \STATE \textbf{Parameters}: update frequency $m$, initial step sizes $\eta_0$\ and $\eta_1$ (only used in the first two epochs), weighting parameter $\beta\in(0,1)$, initial point $\tilde{x}_0$
        \FOR{$k=0, 1,\cdots$}
            \IF{$k>0$}
                \STATE $\eta_k=\frac{1}{m}\cdot\|\tilde{x}_k-\tilde{x}_{k-1}\|^2_2/|(\tilde{x}_k-\tilde{x}_{k-1})^\T(\hat{g}_k-\hat{g}_{k-1})|$
            \ENDIF
            \STATE $x_0 = \tilde{x}_k$
            \STATE $\hat{g}_{k+1}=0$
            \FOR{$t=0,\cdots,m-1$}
                \STATE Randomly pick $i_t\in \{1,\dots,n\}$
                \STATE $x_{t+1}=x_t- {\eta}_k \nabla f_{i_t}(x_t)$ \hfill $(*)$
                \STATE $\hat{g}_{k+1}=\beta \nabla f_{i_t}(x_t) + (1-\beta) \hat{g}_{k+1}$
            \ENDFOR
            \STATE $\tilde{x}_{k+1}=x_{m}$
        \ENDFOR
   \end{algorithmic}
\end{algorithm}

\begin{remark}
We have a few remarks about SGD-BB (Algorithm \ref{alg:sgd-bb}).
\begin{enumerate}
\item SGD-BB takes the average of the stochastic gradients in one epoch as an estimation of the full gradient.
\item Note that for computing $\eta_k$ in Algorithm \ref{alg:sgd-bb}, we actually take the absolute value for the BB formula \eqref{bb1-eta}. This is because that unlike SVRG-BB, $\hat{g}_k$ in Algorithm \ref{alg:sgd-bb} is the average of $m$ stochastic gradients at different iterates, not an exact full gradient. As a result, the step size generated by \eqref{bb1-eta} can be negative. This can be seen from the following argument. Suppose $\beta$ is chosen such that
    \begin{equation} \label{eq:average}
    \hat{g}_k=\frac{1}{m}\sum_{t=0}^{m-1} \nabla f_{i_t}(x_t),
    \end{equation}
    where we use the same notation as in Algorithm \ref{alg:svrg-bb} and $x_t$ $(t=0,1,\ldots,m-1)$ denote the iterates in the $(k-1)$-st epoch.
     From \eqref{eq:average}, it is easy to see that
\begin{equation*}
    \tilde{x}_k-\tilde{x}_{k-1} = -m\eta_{k-1}\hat{g}_k.
\end{equation*}
By substituting this equality into the equation for computing $\eta_k$ in Algorithm \ref{alg:sgd-bb}, we have
\begin{align*}
    \eta_k=&\frac{1}{m}\cdot\frac{\|\tilde{x}_k-\tilde{x}_{k-1}\|^2}{|(\tilde{x}_k-\tilde{x}_{k-1})^\T (\hat{g}_k-\hat{g}_{k-1})|} \\
    =&\frac{1}{m}\cdot\frac{\|-m\eta_{k-1}\hat{g}_k\|^2}{|(-m\eta_{k-1}\hat{g}_k)^\T (\hat{g}_k-\hat{g}_{k-1} )|} \\
    =&\frac{\eta_{k-1}}{\left| 1- \hat{g}_k^\T \hat{g}_{k-1}/\|\hat{g}_k\|_2^2\right|}. \numberthis \label{eq:sgd-bb}
\end{align*}
Without taking the absolute value, the denominator of \eqref{eq:sgd-bb} is $\hat{g}_k^\T \hat{g}_{k-1}/\|\hat{g}_k\|_2^2-1$, which can be negative in stochastic settings. 
\item Moreover, from \eqref{eq:sgd-bb} we have the following observations.
If $\hat{g}_k^\T\hat{g}_{k-1} < 0$, then $\eta_k$ is smaller than $\eta_{k-1}$. This is reasonable because $\hat{g}_k^\T\hat{g}_{k-1} < 0$ indicates that the step size is too large and we need to shrink it. If $\hat{g}_k^\T\hat{g}_{k-1} > 0$, then it indicates that we should be more aggressive to take larger step size. We found from our numerical experiments that when the iterates are close to optimum, the size of $\hat{g}_k$ and $\hat{g}_{k-1}$ do not differentiate much. As a result, $\eta_k$ is usually increased from $\eta_{k-1}$ by using \eqref{eq:sgd-bb}. Hence, the way we compute $\eta_k$ in Algorithm \ref{alg:sgd-bb} is in a sense to dynamically adjust the step size, by evaluating whether we are moving the iterates along the right direction. This kind of idea can be traced back to \cite{Kesten-1958}.
\item Furthermore, in order to make sure the averaged stochastic gradients $\hat{g}_k$ in \eqref{eq:average} is close to $\nabla F(\tilde{x}_k)$, it is natural to emphasize more on the latest sample gradients. Therefore, in Algorithm \ref{alg:sgd-bb} we update $\hat{g}_k$ recursively using
\begin{equation*}
    \hat{g}_{k+1}=\beta \nabla f_{i_t}(x_t) + (1-\beta) \hat{g}_{k+1},
\end{equation*}
starting from $\hat{g}_{k+1} =0$, where $\beta\in(0,1)$ is a weighting parameter.
\end{enumerate}
\end{remark}

Note that SGD-BB requires the averaged gradients in two epochs to compute the BB step size, which can only be done starting from the third epoch. Therefore, we need to specify the step sizes $\eta_0$ and $\eta_1$ for the first two epochs. From our numerical experiments, we found that the performance of SGD-BB is not sensitive to choices of $\eta_0$ and $\eta_1$. 

\subsection{Smoothing Technique for the Step Sizes} \label{sub:smoothing}
Due to the randomness of the stochastic gradients, the step size computed in SGD-BB may vibrate drastically sometimes and this may cause instability of the algorithm. Inspired by \cite{masse2015speed}, we propose the following smoothing technique to stabilize the step size.

It is known that in order to guarantee the convergence of SGD, the step sizes are required to be diminishing. Similar as in \cite{masse2015speed}, we assume the step sizes are in the form of $C/\phi(k)$, where $C>0$ is an unknown constant that needs to be estimated, $\phi(k)$ is a pre-specified function that controls the decreasing rate of the step size, and a typical choice of function $\phi$ is $\phi(k)=k+1$.
In the $k$-th epoch of Algorithm \ref{alg:sgd-bb}, we have all the previous step sizes $\eta_2,\eta_3, \dots, \eta_k$ generated by the BB method, while the step sizes generated by the function $C/\phi(k)$ are given by $C/\phi(2), C/\phi(3),\ldots,C/\phi(k)$. In order to ensure that these two sets of step sizes are close to each other, we solve the following optimization problem to determine the unknown parameter $C$:
\be\label{smooth-log}
    \hat{C}_k:=\argmin_{C}\ \sum_{j=2}^k\left[\log{\frac{C}{\phi(j)}}-\log{\eta_j}\right]^2.
\ee
Here we take the logarithms of the step sizes to ensure that the estimation is not dominated by those $\eta_j$'s with large magnitudes. It is easy to verify that the solution to \eqref{smooth-log} is given by
\begin{equation*}
    \hat{C}_k=\prod_{j=2}^k \left[\eta_j\phi(j) \right]^{1/(k-1)}.
\end{equation*}
Therefore, the smoothed step size for the $k$-th epoch of Algorithm \ref{alg:sgd-bb} is:
\begin{equation}\label{smooth-eta}
    \tilde{\eta}_k=\hat{C}_k/\phi(k)=\prod_{j=2}^k \left[\eta_j\phi(j) \right]^{1/(k-1)}/\phi(k).
\end{equation}
That is, we replace the $\eta_k$ in equation $(*)$ of Algorithm \ref{alg:sgd-bb} by $\tilde{\eta}_k$ in \eqref{smooth-eta}.

In practice, we do not need to store all the $\eta_j$'s and $\hat{C}_k$ can be computed recursively by
\begin{equation*}
    \hat{C}_k=\hat{C}_{k-1}^{(k-2)/(k-1)}\cdot \left[\eta_k\phi(k)\right]^{1/(k-1)}.
\end{equation*}


\subsection{Incorporating BB Step Size to SGD Variants}

The BB step size and the smoothing technique we used in SGD-BB (Algorithm \ref{alg:sgd-bb}) can also be used in other variants of SGD. In this section, we use SAG as an example to illustrate how to incorporate the BB step size.
SAG with BB step size (denoted as SAG-BB) is described as in Algorithm \ref{alg:sag-bb}. 
Because SAG does not need diminishing step sizes to ensure convergence, in the smoothing technique we just choose $\phi(k)\equiv 1$. In this case, the smoothed step size $\tilde{\eta}_k$ is equal to the geometric mean of all previous BB step sizes.

\begin{algorithm}[ht]
    \caption{SAG with BB step size (SAG-BB)}\label{alg:sag-bb}
    \begin{algorithmic}
        \STATE \textbf{Parameters}: update frequency $m$, initial step sizes $\eta_0$\ and $\eta_1$ (only used in the first two epochs), weighting parameter $\beta\in(0,1)$, initial point $\tilde{x}_0$
        \STATE $y_i=0$ for $i=1,\dots,n$
        \FOR{$k=0, 1,\cdots$}
            \IF{$k>0$}
                \STATE $\eta_k=\frac{1}{m}\cdot\|\tilde{x}_k-\tilde{x}_{k-1}\|^2_2/|(\tilde{x}_k-\tilde{x}_{k-1})^\T(\hat{g}_k-\hat{g}_{k-1})|$
                \STATE $\tilde{\eta}_k=\left(\prod_{j=2}^k \eta_j \right)^{\frac{1}{k-1}}$ \hfill $\triangleright$ smoothing technique
            \ENDIF
            \STATE $x_0 = \tilde{x}_k$
            \STATE $\hat{g}_{k+1}=0$
            \FOR{$t=0,\cdots,m-1$}
                \STATE Randomly pick $i_t\in \{1,\dots,n\}$
                \STATE $y_{i_t}=\nabla f_{i_t}(x_t)$
                \STATE $x_{t+1}=x_t- \frac{{\eta}_k}{n}\sum_{i=1}^n y_i$ \hfill $\triangleright$ SAG update
                \STATE $\hat{g}_{k+1}=\beta \nabla f_{i_t}(x_t) + (1-\beta) \hat{g}_{k+1}$
            \ENDFOR
            \STATE $\tilde{x}_{k+1}=x_{m}$
        \ENDFOR
   \end{algorithmic}
\end{algorithm}

\section{Numerical Experiments}\label{sec:num}

In this section, we conduct some numerical experiments to demonstrate the efficacy of our SVRG-BB (Algorithm \ref{alg:svrg-bb}) and SGD-BB (Algorithm \ref{alg:sgd-bb}) algorithms. In particular, we apply SVRG-BB and SGD-BB to solve two standard testing problems in machine learning: logistic regression with $\ell_2$-norm regularization
\be\label{log-reg}
    (\text{\sl LR}) \qquad \min_x \ F(x)=\frac{1}{n}\sum_{i=1}^n \log{\left[1+\mathrm{exp}(-b_i a_i^\T x)\right]}+\frac{\lambda}{2}\|x\|^2_2,
\ee
and the squared hinge loss SVM with $\ell_2$-norm regularization
\be\label{svm}
    (\text{\sl SVM}) \qquad \min_x \ F(x)=\frac{1}{n}\sum_{i=1}^n \left([1-b_i a_i^\T x]_+\right)^2+\frac{\lambda}{2}\|x\|^2_2,
\ee
where $a_i\in \mathbb{R}^d$ and $b_i\in \{\pm 1\}$ are the feature vector and class label of the $i$-th sample, respectively, and $\lambda>0$ is a weighting parameter.

We tested SVRG-BB and SGD-BB for \eqref{log-reg} and \eqref{svm} for three standard real data sets, which were downloaded from the LIBSVM website\footnote{\url{www.csie.ntu.edu.tw/~cjlin/libsvmtools/}.}.
Detailed information of these three data sets are given in Table \ref{tab:data}. 

\begin{table}[ht]
\center
\caption{Data and model information of the experiments}
\label{tab:data}
\begin{tabular}{ |c|c|c|c|c| }
 \hline
 Dataset & $n$ & $d$ & model & $\lambda$  \\ \hline
 rcv1.binary & 20,242 & 47,236 & \textsl{LR} & $10^{-5}$ \\
 w8a & 49,749 & 300 & \textsl{LR}  & $10^{-4}$ \\
 ijcnn1 & 49,990 & 22 & \textsl{SVM}  & $10^{-4}$ \\
 \hline
\end{tabular}
\end{table}


\subsection{Numerical Results of SVRG-BB}

\begin{figure}[t]
\begin{subfigure}[h]{0.32\textwidth}
\includegraphics[width=0.95\linewidth]{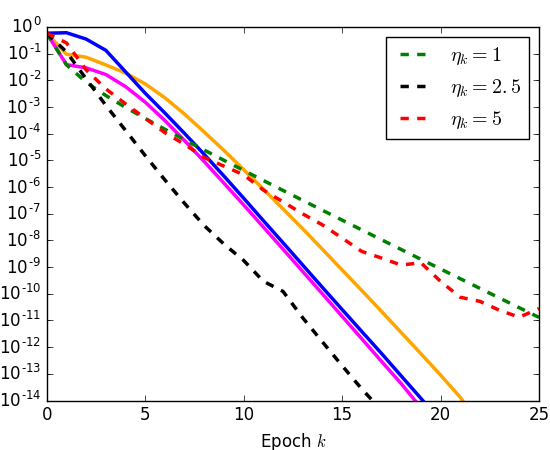}
\caption{Sub-optimality on rcv1.binary}
\label{fig:svrg_rcv1_1}
\end{subfigure}
\begin{subfigure}[h]{0.32\textwidth}
\includegraphics[width=0.95\linewidth]{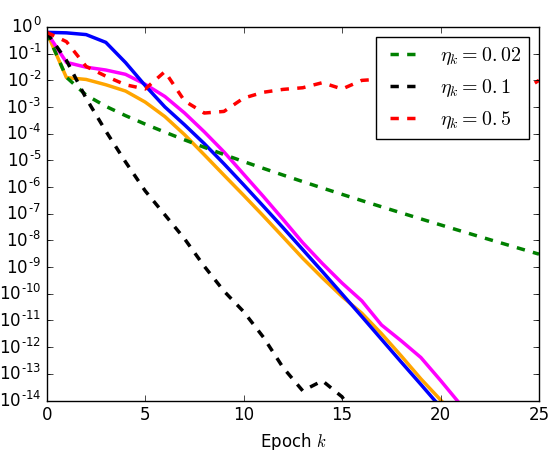}
\caption{Sub-optimality on w8a}
\label{fig:svrg_w8a_1}
\end{subfigure}
\begin{subfigure}[h]{0.32\textwidth}
\includegraphics[width=0.95\linewidth]{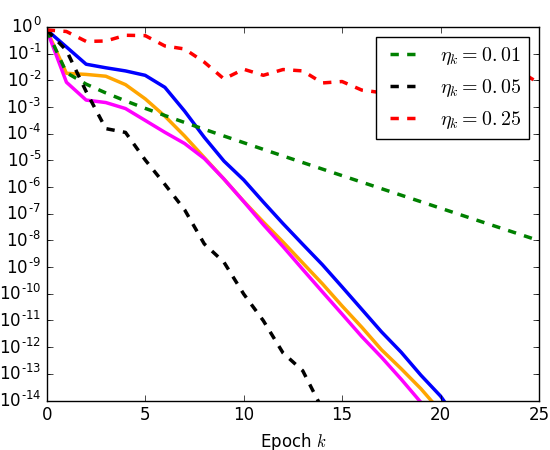}
\caption{Sub-optimality on ijcnn1}
\label{fig:svrg_ijcnn1_1}
\end{subfigure}

\begin{subfigure}[h]{0.32\textwidth}
\includegraphics[width=0.95\linewidth]{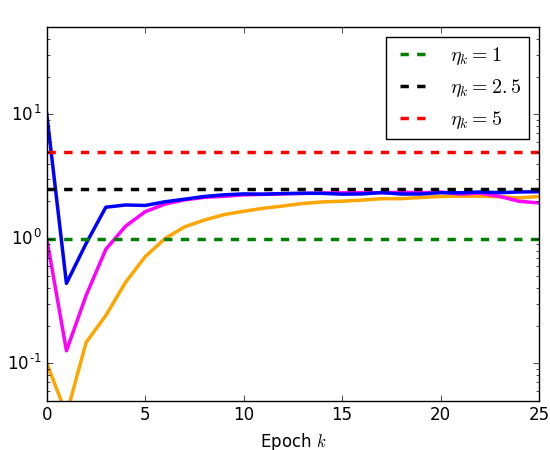}
\caption{Step sizes on rcv1.binary}
\label{fig:svrg_rcv1_2}
\end{subfigure}
\begin{subfigure}[h]{0.32\textwidth}
\includegraphics[width=0.95\linewidth]{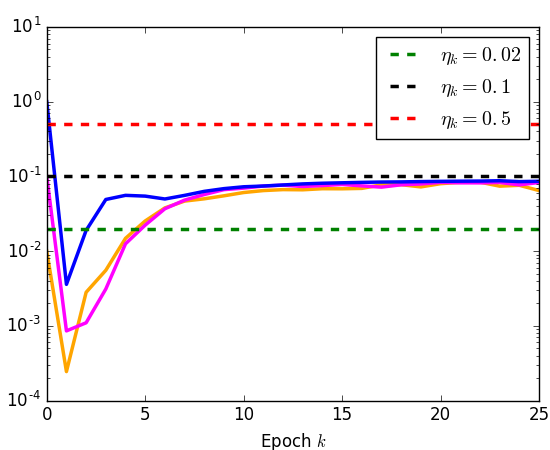}
\caption{Step sizes on w8a}
\label{fig:svrg_w8a_2}
\end{subfigure}
\begin{subfigure}[h]{0.32\textwidth}
\includegraphics[width=0.95\linewidth]{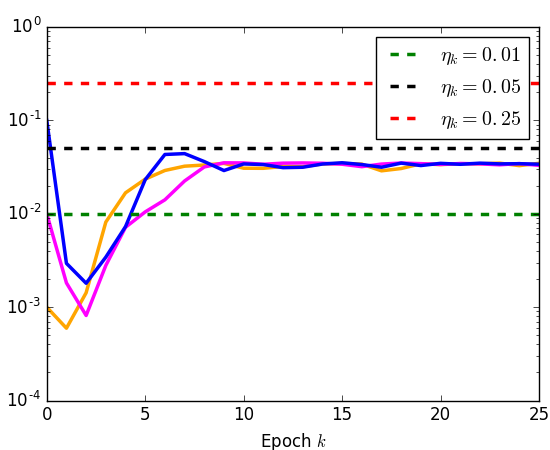}
\caption{Step sizes on ijcnn1}
\label{fig:svrg_ijcnn1_2}
\end{subfigure}

\caption{Comparison of SVRG-BB and SVRG with fixed step sizes on different problems. The dashed lines stand for SVRG with different fixed step sizes $\eta_k$ given in the legend. The solid lines stand for SVRG-BB with different $\eta_0$; for example, the solid lines in Sub-figures \subref{fig:svrg_rcv1_1} and \subref{fig:svrg_rcv1_2} correspond to SVRG-BB with $\eta_0=10,1,0.1$, respectively.}
\label{fig:svrg}
\end{figure}

In this section, we compare SVRG-BB (Algorithm \ref{alg:svrg-bb}) with SVRG (Algorithm \ref{alg:svrg}) for solving \eqref{log-reg} and \eqref{svm}. We used the best-tuned step size for SVRG, and three different initial step sizes $\eta_0$ for SVRG-BB. For both SVRG-BB and SVRG, we set $m=2n$ as suggested in \cite{svrg}.

The comparison results of SVRG-BB and SVRG are shown in Figure \ref{fig:svrg}. In all the six sub-figures, the $x$-axis denotes the number of epochs $k$, i.e., the number of outer loops in Algorithm \ref{alg:svrg-bb}. In Figures \ref{fig:svrg_rcv1_1}, \ref{fig:svrg_w8a_1} and \ref{fig:svrg_ijcnn1_1}, the $y$-axis denotes the sub-optimality $F(\tilde{x}_k)-F(x^*)$, and in Figures \ref{fig:svrg_rcv1_2}, \ref{fig:svrg_w8a_2} and \ref{fig:svrg_ijcnn1_2}, the $y$-axis denotes the step size $\eta_k$.
Note that $x^*$ is obtained by running SVRG with the best-tuned step size until it converges, which is a common practice in the testing of stochastic gradient descent methods. In all the six sub-figures, the dashed lines correspond to SVRG with fixed step sizes given in the legends of the figures. Moreover, the dashed lines in black color always represent SVRG with best-tuned fixed step size, and the green dashed lines use a smaller fixed step size, and the red dashed lines use a larger fixed step size, compared with the best-tuned ones. The solid lines correspond to SVRG-BB with different initial step sizes $\eta_0$. The solid lines with blue, purple and yellow colors in Figures \ref{fig:svrg_rcv1_1} and \ref{fig:svrg_rcv1_2} correspond to $\eta_0 = 10$, $1$, and $0.1$, respectively; the solid lines with blue, purple and yellow colors in Figures \ref{fig:svrg_w8a_1} and \ref{fig:svrg_w8a_2} correspond to $\eta_0 = 1$, $0.1$, and $0.01$, respectively; the solid lines with blue, purple and yellow colors in Figures \ref{fig:svrg_ijcnn1_1} and \ref{fig:svrg_ijcnn1_2} correspond to $\eta_0 = 0.1$, $0.01$, and $0.001$, respectively.

It can be seen from Figures \ref{fig:svrg_rcv1_1}, \ref{fig:svrg_w8a_1} and \ref{fig:svrg_ijcnn1_1} that, SVRG-BB can always achieve the same level of sub-optimality as SVRG with the  best-tuned step size. Although SVRG-BB needs slightly more epochs compared with SVRG with the best-tuned step size, it clearly outperforms SVRG with the other two choices of step sizes. Moreover, from Figures \ref{fig:svrg_rcv1_2}, \ref{fig:svrg_w8a_2} and \ref{fig:svrg_ijcnn1_2} we see that the step sizes computed by SVRG-BB converge to the best-tuned step sizes after about 10 to 15 epochs. From Figure \ref{fig:svrg} we also see that SVRG-BB is not sensitive to the choice of $\eta_0$.
Therefore, SVRG-BB has very promising potential in practice because it generates the best step sizes automatically while running the algorithm.

\subsection{Numerical Results of SGD-BB}

\begin{figure}[t]
\begin{subfigure}[h]{0.32\textwidth}
\includegraphics[width=0.95\linewidth]{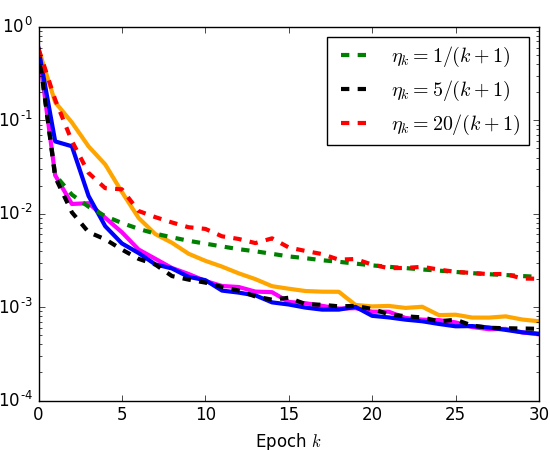}
\caption{Sub-optimality on rcv1.binary}
\label{fig:sgd_rcv1_1}
\end{subfigure}
\begin{subfigure}[h]{0.32\textwidth}
\includegraphics[width=0.95\linewidth]{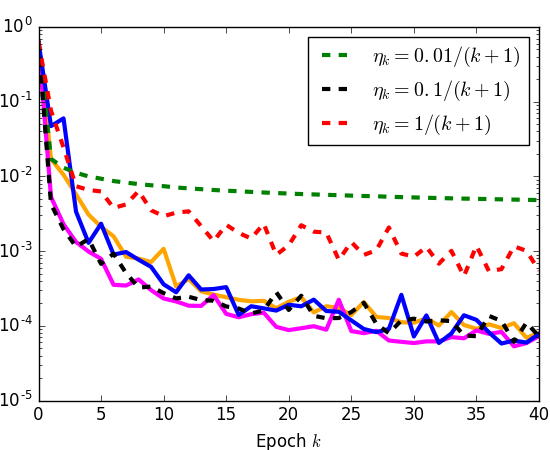}
\caption{Sub-optimality on w8a}
\label{fig:sgd_w8a_1}
\end{subfigure}
\begin{subfigure}[h]{0.32\textwidth}
\includegraphics[width=0.95\linewidth]{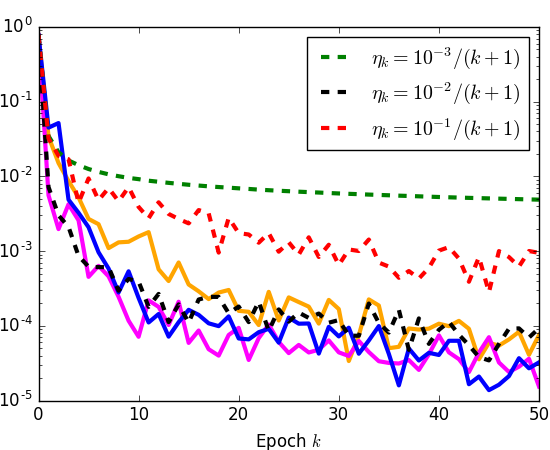}
\caption{Sub-optimality on ijcnn1}
\label{fig:sgd_ijcnn1_1}
\end{subfigure}

\begin{subfigure}[h]{0.32\textwidth}
\includegraphics[width=0.95\linewidth]{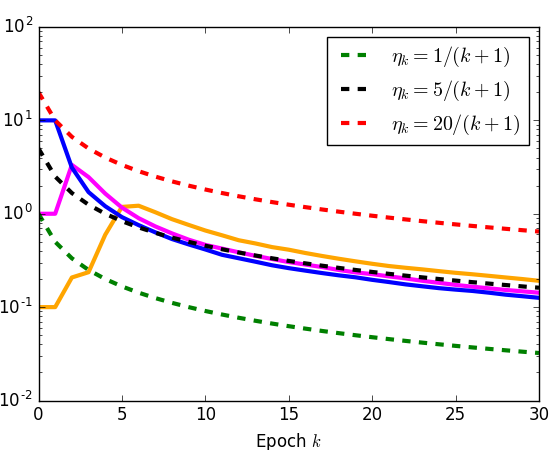}
\caption{Step sizes on rcv1.binary}
\label{fig:sgd_rcv1_2}
\end{subfigure}
\begin{subfigure}[h]{0.32\textwidth}
\includegraphics[width=0.95\linewidth]{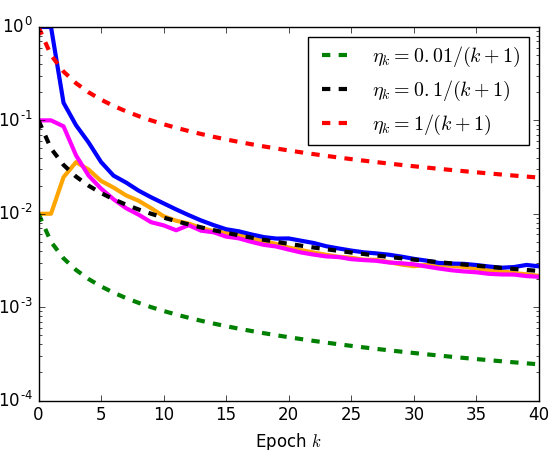}
\caption{Step sizes on w8a}
\label{fig:sgd_w8a_2}
\end{subfigure}
\begin{subfigure}[h]{0.32\textwidth}
\includegraphics[width=0.95\linewidth]{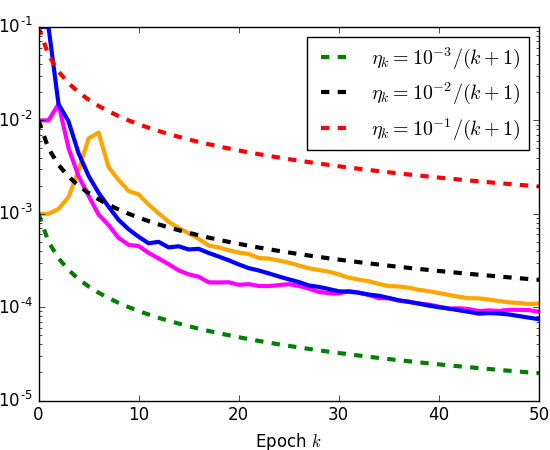}
\caption{Step sizes on ijcnn1}
\label{fig:sgd_ijcnn1_2}
\end{subfigure}

\caption{Comparison of SGD-BB and SGD. The dashed lines correspond to SGD with diminishing step sizes in the form $\eta/(k+1)$ with different constants $\eta$. The solid lines stand for SGD-BB with different initial step sizes $\eta_0$; for example, the solid lines in Sub-figure \subref{fig:sgd_rcv1_1} and \subref{fig:sgd_rcv1_2} correspond to SGD-BB with $\eta_0=10,1,0.1$, respectively.}
\label{fig:sgd}
\end{figure}

In this section, we compare SGD-BB with smoothing technique (Algorithm \ref{alg:sgd-bb}) with SGD for solving \eqref{log-reg} and \eqref{svm}. We set $m=n$, $\beta=10/m$ and $\eta_1=\eta_0$ in our experiments. We used $\phi(k)=k+1$ when applying the smoothing technique. Since SGD requires diminishing step size to converge, we tested SGD with diminishing step size in the form $\eta/(k+1)$ with different constants $\eta$. The comparison results are shown in Figure \ref{fig:sgd}. Similar as Figure \ref{fig:svrg}, the dashed line with black color represents SGD with the best-tuned $\eta$, and the green and red dashed lines correspond to the other two choices of $\eta$. The solid lines with blue, purple and yellow colors in Figures \ref{fig:sgd_rcv1_1} and \ref{fig:sgd_rcv1_2} correspond to $\eta_0 = 10$, $1$, and $0.1$, respectively; the solid lines with blue, purple and yellow colors in Figures \ref{fig:sgd_w8a_1} and \ref{fig:sgd_w8a_2} correspond to $\eta_0 = 1$, $0.1$, and $0.01$, respectively; the solid lines with blue, purple and yellow colors in Figures \ref{fig:sgd_ijcnn1_1} and \ref{fig:sgd_ijcnn1_2} correspond to $\eta_0 = 0.1$, $0.01$, and $0.001$, respectively.

From Figures \ref{fig:sgd_rcv1_1}, \ref{fig:sgd_w8a_1} and \ref{fig:sgd_ijcnn1_1} we can see that SGD-BB gives comparable or even better sub-optimality than SGD with best-tuned diminishing step size, and SGD-BB is significantly better than SGD with the other two choices of step size. From Figures \ref{fig:sgd_rcv1_2}, \ref{fig:sgd_w8a_2} and \ref{fig:sgd_ijcnn1_2} we see that after only a few epochs, the step sizes generated by SGD-BB approximately coincide with the best-tuned diminishing step sizes. It can also be seen that after only a few epochs, the step sizes are stabilized by the smoothing technique and they approximately follow the same decreasing trend as the best-tuned diminishing step sizes.

\subsection{Comparison with Other Methods}

\begin{figure}[t]
\begin{minipage}[h]{0.32\textwidth}
\centering
rcv1.binary
\end{minipage}
\begin{minipage}[h]{0.32\textwidth}
\centering
w8a
\end{minipage}
\begin{minipage}[h]{0.32\textwidth}
\centering
ijcnn1
\end{minipage}

\begin{subfigure}[h]{0.32\textwidth}
\includegraphics[width=0.95\linewidth]{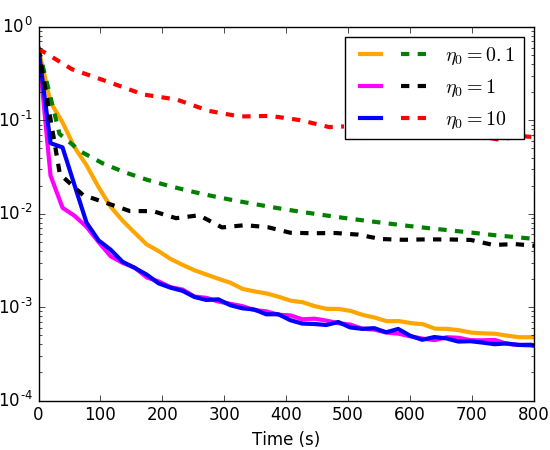}
\caption{AdaGrad versus SGD-BB}
\label{fig:AdaGrad_1}
\end{subfigure}
\begin{subfigure}[h]{0.32\textwidth}
\includegraphics[width=0.95\linewidth]{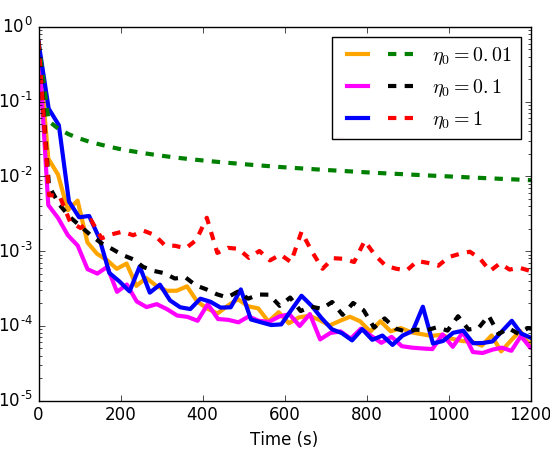}
\caption{AdaGrad versus SGD-BB}
\label{fig:AdaGrad_2}
\end{subfigure}
\begin{subfigure}[h]{0.32\textwidth}
\includegraphics[width=0.95\linewidth]{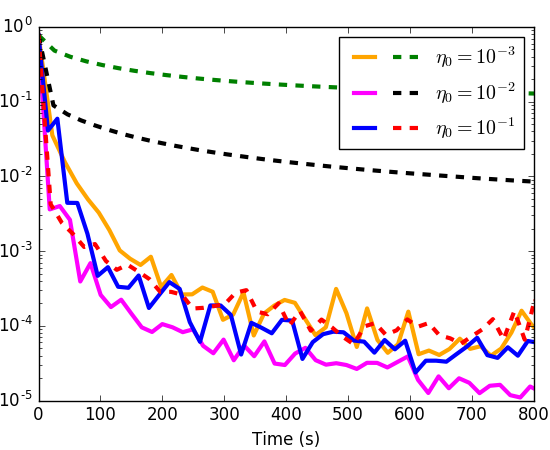}
\caption{AdaGrad versus SGD-BB}
\label{fig:AdaGrad_3}
\end{subfigure}

\begin{subfigure}[h]{0.32\textwidth}
\includegraphics[width=0.95\linewidth]{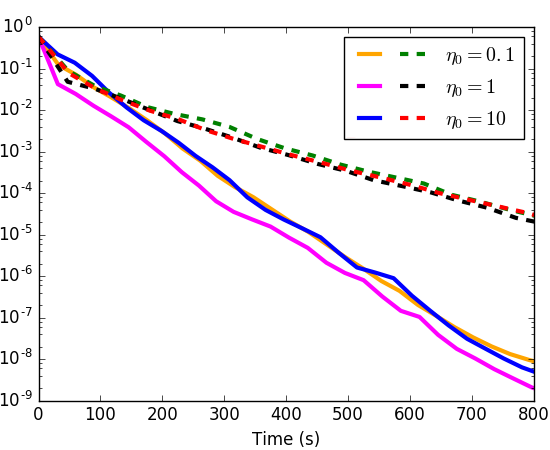}
\caption{SAG-L versus SAG-BB}
\label{fig:LineSearch_1}
\end{subfigure}
\begin{subfigure}[h]{0.32\textwidth}
\includegraphics[width=0.95\linewidth]{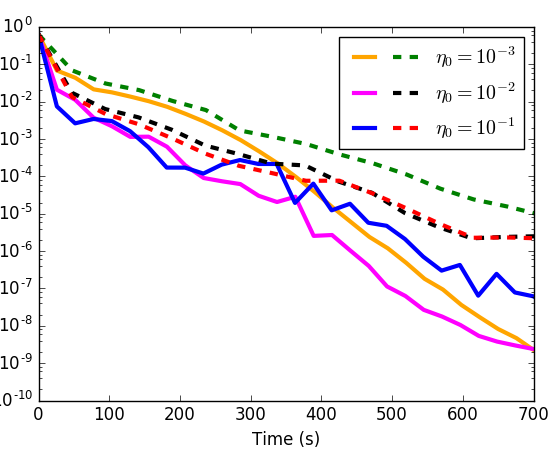}
\caption{SAG-L versus SAG-BB}
\label{fig:LineSearch_2}
\end{subfigure}
\begin{subfigure}[h]{0.32\textwidth}
\includegraphics[width=0.95\linewidth]{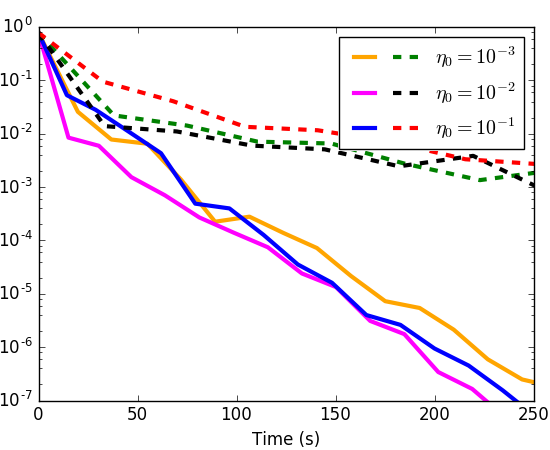}
\caption{SAG-L versus SAG-BB}
\label{fig:LineSearch_3}
\end{subfigure}

\begin{subfigure}[h]{0.32\textwidth}
\includegraphics[width=0.95\linewidth]{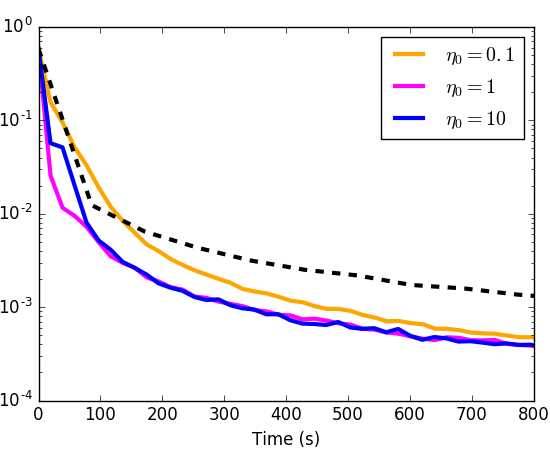}
\caption{oLBFGS versus SGD-BB}
\label{fig:oLBFGS_1}
\end{subfigure}
\begin{subfigure}[h]{0.32\textwidth}
\includegraphics[width=0.95\linewidth]{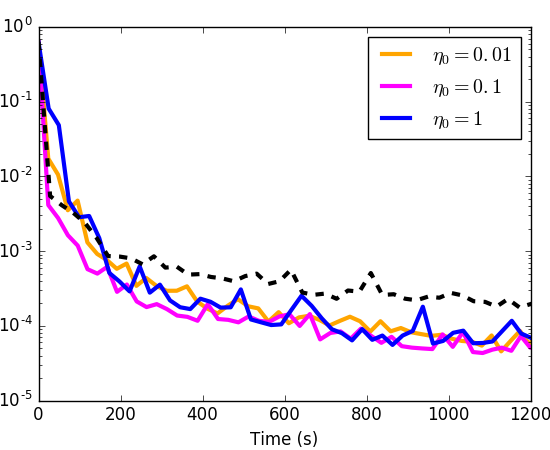}
\caption{oLBFGS versus SGD-BB}
\label{fig:oLBFGS_2}
\end{subfigure}
\begin{subfigure}[h]{0.32\textwidth}
\includegraphics[width=0.95\linewidth]{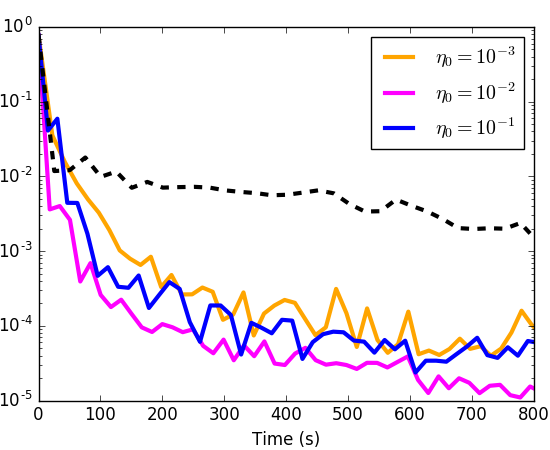}
\caption{oLBFGS versus SGD-BB}
\label{fig:oLBFGS_3}
\end{subfigure}

\caption{Comparison of SGD-BB and SAG-BB with three existing methods. The $x$-axes all denote the CPU time (in seconds). The $y$-axes all denote the sub-optimality $F(x_k)-F(x^*)$. In the first row, solid lines stand for SGD-BB, and dashed lines stand for AdaGrad; In the second row, solid lines stand for SAG-BB, and dashed lines stand for SAG with line search; In the third row, solid lines stand for SGD-BB, and the dashed lines stand for oLBFGS.}
\label{fig:comparison}
\end{figure}

In this section, we compare our SGD-BB (Algorithm \ref{alg:sgd-bb}) and SAG-BB (Algorithm \ref{alg:sag-bb}) with three existing methods: AdaGrad \cite{duchi2011adaptive}, SAG with line search (denoted as SAG-L) \cite{schmidt2013minimizing}, and a stochastic quasi-Newton method: oLBFGS \cite{schraudolph2007stochastic}. For both SGD-BB and SAG-BB, we set $m=n$ and $\beta=10/m$. Because these methods have very different per-iteration complexity, we compare their CPU time needed to achieve the same sub-optimality.  


Figures \ref{fig:AdaGrad_1}, \ref{fig:AdaGrad_2} and \ref{fig:AdaGrad_3} show the comparison results of SGD-BB and AdaGrad. From these figures we see that AdaGrad usually has a very quick start, but in many cases the convergence becomes slow in later iterations. Besides, AdaGrad is still somewhat sensitive to the initial step sizes. Especially, when a small initial step size is used, AdaGrad is not able to increase the step size to a suitable level. As a contrast, SGD-BB converges very fast in all three tested problems, and it is not sensitive to the initial step size $\eta_0$.

Figures \ref{fig:LineSearch_1}, \ref{fig:LineSearch_2} and \ref{fig:LineSearch_3} show the comparison results of SAG-BB and SAG-L. From these figures we see that the SAG-L is quite robust and is not sensitive to the choice of $\eta_0$. However, SAG-BB is much faster than SAG-L to reach the same sub-optimality on the tested problems. 

Figures \ref{fig:oLBFGS_1}, \ref{fig:oLBFGS_2} and \ref{fig:oLBFGS_3} show the comparison results of SGD-BB and oLBFGS. 
For oLBFGS we used a best-tuned step size. From these figures we see that oLBFGS is much slower than SGD-BB, which is mainly because oLBFGS needs more computational effort per iteration.

\section{Conclusion}\label{sec:con}
In this paper we proposed to use the BB method to compute the step sizes for SGD and SVRG, which leads to two new stochastic gradient methods: SGD-BB and SVRG-BB. We proved the linear convergence of SVRG-BB for strongly convex function, and as a by-product, we proved the linear convergence of the original SVRG with option I for strongly convex function. We also proposed a smoothing technique to stabilize the step sizes generated in SGD-BB, and we showed how to incorporate the BB method to other SGD variants such as SAG. We conducted numerical experiments on real data sets to compare the performance of SVRG-BB and SGD-BB with existing methods. The numerical results showed that the performance of our SVRG-BB and SGD-BB is comparable to and sometimes even better than the original SVRG and SGD with best-tuned step sizes, and is superior to some advanced SGD variants.

\bibliography{references}
\bibliographystyle{plain}

\end{document}